\newtheorem{theorem}{Theorem}
\newtheorem{lemma}[theorem]{Lemma}
\newtheorem{corollary}[theorem]{Corollary}
\newtheorem{definition}[theorem]{Definition}
\theoremstyle{definition}
\newcommand{\width}{\operatorname{width}}
\newcommand{\R}{\mathbb{R}}
\newcommand{\N}{\mathbb{N}}
\newcommand{\Sph}{\mathbb{S}}
\DeclareMathOperator{\interior}{inn}
\newcommand\doi[1]{\href{http://dx.doi.org/#1}{\texttt{doi:#1}}}
\renewcommand\epsilon{\varepsilon}
\newcommand{\pacoc}[1]{\todo[size=\tiny,color=pink!30]{#1  \hfill --- L.}}
\newcommand{\pacos}[1]{\todo[size=\tiny,color=green!30]{#1  \hfill --- P.}}
\title{The algorithmic Fried Potato Problem in two dimensions%
\thanks{This research is supported by Grants PID2019-106188GB-I00 and PID2022-137283NB-C21 of MCIN/AEI/10.13039/501100011033 / FEDER, UE and by project CLaPPo (21.SI03.64658) of Universidad de Cantabria and Banco Santander.
}}
\author[1]{Francisco Criado\thanks{Email: francisco.criado@cunef.edu}}
\author[2]{Francisco Santos\thanks{Email: francisco.santos@unican.es}}
\affil[1]{Departamento de Matem\'aticas, CUNEF Universidad, Madrid, Spain}
\affil[2]{Departamento de Matem\'aticas, Estad\'istica y Computaci\'on, Universidad de Cantabria, 39005 Santander, Spain}
\begin{document}

\maketitle

\begin{abstract}
  Conway's Fried Potato Problem seeks to determine the best way to cut a convex body in $n$ parts by $n-1$ hyperplane cuts (with the restriction that the $i$-th cut only divides in two one of the parts obtained so far), in a way as to minimize the maxuimum of the inradii of the parts. It was shown by Bezdek and Bezdek that the solution is always attained by $n-1$ parallel cuts.
  But the algorithmic problem of finding the best direction for these parallel cuts remains. 
  
  In this note we show that for a convex $m$-gon $P$, this direction (and hence the cuts themselves) can be found in time $O(m\log^4 m)$, which improves on a quadratic algorithm proposed by Ca\~nete-Fern\'andez-M\'arquez (DMD 2022).
  Our algorithm first preprocesses what we call the \emph{dome} (closely related to the medial axis) of $P$ using a variant of the Dobkin-Kirkpatrick hierarchy, so that linear programs in the dome and in slices of it can be solved in polylogarithmic time.
\end{abstract}

\section{From fried potatoes to baker's potatoes}

Conway's fried potato problem is stated in \cite{unsolved} (problem C1) as follows: ``In order to fry it as expeditiously as possible Conway wishes to slice a given convex potato into $n$ pieces by $n - 1$ successive plane cuts (just one piece being divided by each cut) so as to minimize the greatest inradius of the pieces.''

The problem was solved by A. Bezdek and K. Bezdek~\cite{Bezdek^2} who showed that, no matter what convex potato you start with, the best solution is to cut it with $n-1$ parallel and equally spaced hyperplanes. Let us formalize this a little bit:

\begin{definition}
  Let $C\subseteq \R^d$ be a convex body (that is, a compact convex subset with nonempty interior). 
  \begin{enumerate}

\item   The \emph{directional width} of  $C\subseteq \R^d$ in a direction $v\in\Sph^{d-1}$ is the distance between two parallel supporting hyperplanes of $C$ with normal vector $v$:
  \[ \width_v (C) = \max_{x \in C} v^T x - \min_{x\in C} v^T x. \]
  The \emph{width} of $C$ is its minimum directional width:
  \[ \width (C) = \min_{v \in \Sph^{d-1}} \width_v (C). \]

\item 
  The \emph{inner parallel body} of $C$ at a distance $\rho\ge 0$~\cite[p.~134]{Schneider}
  is the set of points of $C$ that are centers of balls of radius $\rho$ contained in $C$.
  \[ \interior_{\rho} (C) = \{x \in C : B(x, \rho) \subseteq C \}. \]
The $\rho$-rounded body $C^{\rho}$ is the union of all closed $\rho$-balls contained in $C$:
  \[ C^{\rho} = \bigcup_{x\in\interior_{\rho} (C)}   B(x, \rho). \]

\item The inradius $I(C)$ of $C$ is the maximum radius of a ball contained in $C$. Equivalently, it is the maximum $\rho$ for which $\interior_{\rho} (C)\ne \emptyset$.
\end{enumerate}
\end{definition}

Observe that 
$C^{\rho} = \interior_{\rho} (C) + B(0,\rho)$. Also if $C= \{x\in\R^d : Ax\leq b\}$ is a polyhedron with $||A_i|| = 1$ for each $i$, then $\interior_{\rho} (C) = \{x\in \R^d : Ax\leq b-\rho\}$, where $b-\rho$ is shorthand for $(b_1-\rho, \dots, b_m-\rho)$.

The statement and solution of Conways's fried potato problem can now be stated as follows:

\begin{theorem}[Bezdek-Bezdek~\cite{Bezdek^2}]
\label{thm:Bezdek}
\pacos{Check if this is stated there. If not, refer also to Cañete et al.}
  Let $C$ be a convex body in $\R^d$ and $n\in \N$. Let $P$ be a division of $C$ into $n$ subsets $C_1,\dots, C_n$ given by $n-1$ successive hyperplane cuts. These cuts of $P$ do not extend beyond previously made cuts, therefore $(n-1)$ cuts produce $n$ pieces.
  
  Then,
  \[ \max_{i\in [n] } I(C_i) \geq \rho, \]
  where $\rho>0$ is the unique number satisfying
  \begin{equation}
  \label{eq:rounded}
   \width(C^{\rho})=2n\rho.
\end{equation}

  Furthermore, equality holds for the division of $C$ given by $n-1$ parallel and equally spaced hyperplanes normal to the direction attaining $\width(C^{\rho})$.
 \end{theorem}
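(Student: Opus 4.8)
The plan is to split the statement into three parts: (a) existence and uniqueness of the number $\rho$ in \eqref{eq:rounded}; (b) the lower bound $\max_{i\in[n]}I(C_i)\ge\rho$ for \emph{every} admissible division; and (c) the matching upper bound for the indicated parallel division. Part (a) is a soft monotonicity argument. From $C^\sigma=\interior_\sigma(C)+B(0,\sigma)$, additivity of directional width under Minkowski sums, and $\width_v(B(0,\sigma))=2\sigma$, one gets $g(\sigma):=\width(C^\sigma)-2\sigma=\min_{v\in\Sph^{d-1}}\width_v(\interior_\sigma(C))$. Each $\sigma\mapsto\width_v(\interior_\sigma(C))$ is non-increasing, so $g$ is non-increasing; it is continuous, $g(0)=\width(C)>0$, and $g(\sigma)\to0$ as $\sigma\to I(C)$ because $\interior_{I(C)}(C)$ (the set of incenters) has empty interior, hence lies in a hyperplane and so has zero width in the normal direction. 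Therefore, for $n\ge2$, the function $\sigma\mapsto\width(C^\sigma)-2n\sigma=g(\sigma)-2(n-1)\sigma$ is strictly decreasing, positive at $\sigma=0$ and negative as $\sigma\to I(C)$, so it has a unique zero $\rho\in(0,I(C))$; the case $n=1$ (no cut, $\rho=I(C)$) is trivial.

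For (b), fix an admissible division and let $\bar H_1,\dots,\bar H_{n-1}$ be the hyperplanes carrying the cuts. The key step is structural: since no cut extends beyond a previous one, an induction on the cutting tree shows that each piece has the form $C_i=C\cap D_i$, where $D_i$ is an intersection of closed halfspaces each bounded by one of the $\bar H_j$; hence $\partial C_i$ can meet the interior of $C$ only inside $\bigcup_j\bar H_j$. Now fix any $\sigma$ with $\max_iI(C_i)<\sigma\le I(C)$ and take $x\in\interior_\sigma(C)$. If $\dist(x,\bar H_j)>\sigma$ for every $j$, then the closed ball $B(x,\sigma)$ misses all of the $\bar H_j$, hence lies in a single open cell of their arrangement; since $x$ lies in some piece $C_i$, that cell is contained in $D_i$, so $B(x,\sigma)\subseteq C\cap D_i=C_i$, forcing $I(C_i)\ge\sigma$, a contradiction. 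Thus $\interior_\sigma(C)$ is covered by the $n-1$ slabs $\smallSetOf{y}{\dist(y,\bar H_j)\le\sigma}$, each of width $2\sigma$. By Bang's theorem (Tarski's plank problem), a convex body covered by planks has width at most the sum of the plank widths, so $\width(\interior_\sigma(C))\le2(n-1)\sigma$ and hence $\width(C^\sigma)=\width(\interior_\sigma(C))+2\sigma\le2n\sigma$. By (a) this forces $\sigma\ge\rho$, and letting $\sigma\downarrow\max_iI(C_i)$ gives $\max_iI(C_i)\ge\rho$.

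For (c), pick $v_0\in\Sph^{d-1}$ with $\width_{v_0}(C^\rho)=\width(C^\rho)=2n\rho$ and take as cuts the $n-1$ hyperplanes normal to $v_0$ that split $C^\rho$ into $n$ slices of equal $v_0$-width $2\rho$; apply these to $C$. Writing $[\alpha,\beta]$ for the range of $v_0^Tx$ on $\interior_\rho(C)$ (so $\beta-\alpha=\width_{v_0}(\interior_\rho(C))=2(n-1)\rho$), the $k$-th hyperplane is $\smallSetOf{x}{v_0^Tx=\alpha+(2k-1)\rho}$. If a ball $B(x,r)$ with $r>\rho$ were contained in a single piece $C_i$, then $B(x,r)\subseteq C$ would force $x\in\interior_\rho(C)$, hence $\alpha\le v_0^Tx\le\beta$; for an interior slice (of $v_0$-width $2\rho$) this already contradicts $r>\rho$, while for the first slice $\smallSetOf{x}{v_0^Tx\le\alpha+\rho}$ the containment forces $v_0^Tx\le\alpha+\rho-r<\alpha$, again impossible, and symmetrically for the last slice. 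So every piece has inradius at most $\rho$, and together with (b) this division attains exactly $\rho$.

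The crux is the structural-plus-covering step in (b): turning the purely combinatorial ``no cut extends beyond a previous cut'' rule into the statement that $\interior_\sigma(C)$ is covered by $n-1$ planks of width $2\sigma$, after which the desired width bound is exactly the plank inequality. A secondary technical point, handled above by working with $\sigma$ strictly larger than $\max_iI(C_i)$ and passing to the limit, is the borderline case in which some piece has inradius exactly $\max_iI(C_i)$ rather than strictly less.
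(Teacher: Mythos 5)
The paper does not prove this theorem: it is quoted from Bezdek--Bezdek \cite{Bezdek^2} and used as a black box, so there is no in-paper argument to compare against. Your proof is, as far as I can check, correct, and it follows what is essentially the route of the original solution: the entire weight of the lower bound is carried by Bang's solution of Tarski's plank problem. Your part (a) rests on the identity $\width(C^\sigma)=\width(\interior_\sigma(C))+2\sigma$, which is the same identity the present paper uses in the proof of Theorem~\ref{thm:main}. The genuinely nontrivial steps you supply are (i) the structural observation that each piece equals $C\cap D_i$ with $D_i$ an intersection of closed halfspaces bounded by the cut hyperplanes, so that a ball disjoint from all $n-1$ hyperplanes lies in a single open cell of their arrangement and hence in a single piece --- this is exactly where the ``no cut extends beyond a previous cut'' hypothesis enters --- and (ii) the resulting covering of $\interior_\sigma(C)$ by $n-1$ planks of width $2\sigma$, to which Bang's inequality applies; both are sound, as is the limit $\sigma\downarrow\max_i I(C_i)$ that handles the borderline case. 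Two points you gloss over: the continuity of $\sigma\mapsto\width(\interior_\sigma(C))$, needed for the intermediate-value argument in (a), deserves a word (e.g.\ via Hausdorff convergence of the nested inner parallel bodies, or via concavity of $\sigma\mapsto\interior_\sigma(C)$); and in (c) your hyperplanes are equally spaced so as to $n$-sect $C^\rho$ rather than $C$ --- this is the correct reading of the theorem's ``equally spaced,'' but it is worth stating explicitly, since equal spacing measured inside $C$ would in general fail.
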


The solution to the fried potato problem raises the algorithmic question of how to find $\rho$, $v$ and the cuts in the statement. We suggest calling this the \emph{baker's potato problem}.\footnote{Baker's potatoes (\emph{pommes boulang\`ere} in French and \emph{patatas panaderas} in Spanish) are potatoes cut in parallel slices of 2--3 mm. and cooked in the oven.
}

Clearly, the difficult part is to find $\rho$ and the direction $v\in \Sph^{d-1}$ such that $\width_v(C^{\rho})=\width(C^{\rho})$. 
Ca\~nete, Fern\'andez and M\'arquez~\cite{CFM-dmd, CFM-journal} have proposed a quadratic algorithm to do this for a convex polygon in the plane. We here describe  a quasi-linear one:

\begin{theorem}
\label{thm:main}
  Let $P= \{x\in \R^2 : Ax\leq b \}$ be a polygon with non-empty interior, where $A\in \R_{m\times 2}$ and $b\in \R^m$. 
 We can compute the $\rho$ of Theorem~\ref{thm:Bezdek}
and a direction $v \in \Sph^2$ satisfying $\width_v (P^{\rho}) = 2n\rho$ in $O(m \log^4 m)$ time.
\end{theorem}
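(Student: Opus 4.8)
The plan is to reduce the problem to a one-parameter family of planar width queries, and then to answer each of those queries in polylogarithmic time against a preprocessed version of the dome of $P$.

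The first step is to rewrite equation~\eqref{eq:rounded} in terms of inner parallel bodies. Since $P^{\rho} = \interior_{\rho}(P) + B(0,\rho)$ and taking the Minkowski sum with a ball of radius $\rho$ increases \emph{every} directional width by exactly $2\rho$, one has $\width(P^{\rho}) = \width(\interior_{\rho}(P)) + 2\rho$, attained in the same direction; hence \eqref{eq:rounded} is equivalent to $W(\rho) := \width(\interior_{\rho}(P)) = 2(n-1)\rho$. The function $W$ is continuous and non-increasing on $[0,I(P)]$ because inner parallel bodies are nested, with $W(0)=\width(P)>0$ and $W(I(P))=0$, whereas $2(n-1)\rho$ is strictly increasing (we may assume $n\ge 2$; the case $n=1$ gives $\rho=I(P)$ directly). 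Therefore $\rho$ is the unique root of $g(\rho):=W(\rho)-2(n-1)\rho$, lying in $(0,I(P))$. I would then split this into $m$ independent one-dimensional searches. The width of a convex polygon is attained by a direction normal to one of its edges, and every edge of $\interior_{\rho}(P)$ is parallel to an edge of $P$, so $W(\rho)=\min_e \phi_e(\rho)$ with $\phi_e(\rho):=\width_{A_e}(\interior_{\rho}(P))$ and $g=\min_e g_e$ where $g_e:=\phi_e-2(n-1)\rho$. Each $\phi_e$ is concave, piecewise linear and non-increasing in $\rho$ (concavity because $\interior_{\rho}(P)\supseteq \lambda\,\interior_{\rho_1}(P)+(1-\lambda)\,\interior_{\rho_2}(P)$ for $\rho=\lambda\rho_1+(1-\lambda)\rho_2$, so the support functions of $\interior_{\rho}(P)$ are concave in $\rho$; piecewise-linearity because slicing a polytope moves its faces affinely). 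Hence each $g_e$ is concave and strictly decreasing with a unique root $\rho_e\in(0,I(P))$, and one checks that $g(\rho)<0$ exactly when $\rho>\min_e\rho_e$. Consequently $\rho=\min_e\rho_e$, and a direction attaining $\width(P^{\rho})$ is $v=A_{e^{*}}$ for any $e^{*}$ minimizing $\rho_e$. The task becomes: compute each $\rho_e$ in polylogarithmic time after near-linear preprocessing.

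For the preprocessing I would normalize the rows of $A$ to be unit vectors and introduce the three-dimensional polytope
\[
  D := \{\,(x,t)\in\R^{3} : A_i^{T}x+t\le b_i\ (1\le i\le m),\ t\ge 0\,\},
\]
the dome of $P$, whose horizontal slice $D\cap\{t=\rho\}$ is exactly $\interior_{\rho}(P)$; its bottom facet is $P$, its upper boundary is the graph of $x\mapsto\dist(x,\partial P)$, its edges project onto the medial axis of $P$, and it has $O(m)$ faces, hence can be built in $O(m\log m)$ time. I would equip $D$ with a variant of the Dobkin--Kirkpatrick hierarchy that answers, in polylogarithmic time, linear-optimization queries over horizontal slices of $D$; plain linear-programming queries on $D$ already suffice, since for fixed $w\in\R^{2}$ the planar shadow $R_w:=\{(w^{T}x,t):(x,t)\in D\}$ has each of its support functions equal to one such query on $D$, and $\phi_e(\rho)$ is just the horizontal extent of $R_{A_e}$ at height $\rho$. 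To extract a single $\rho_e$: the right boundary $S(\rho)$ and left boundary $s(\rho)$ of $R:=R_{A_e}$ are respectively concave and convex piecewise-linear in $\rho$, and $g_e(\rho)=S(\rho)-s(\rho)-2(n-1)\rho$. Because the supporting vertex of $R$ in direction $(\cos\theta,\sin\theta)$ climbs the right boundary monotonically in height as $\theta$ increases over $(-\pi/2,\pi/2)$, a binary search on $\theta$ (each step one query on $D$) pins down the edge of the right boundary at any prescribed height and so evaluates $S(\rho)$ exactly, and symmetrically $s(\rho)$; then an outer binary search over the right-boundary vertices of $R$, guided by the sign of $g_e$ at each candidate height, confines $\rho_e$ to an interval on which $S$ is a single known linear function, a further search does the same for $s$, and on the resulting interval $g_e$ is linear with known coefficients, so $g_e(\rho)=0$ is solved exactly. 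The searches nest to constant depth over $O(\log m)$-time queries, so each $\rho_e$ costs $O(\operatorname{polylog} m)$; running this over all $m$ edges, taking the minimum to obtain $\rho$, $e^{*}$ and $v=A_{e^{*}}$, and reporting the $n-1$ equally spaced lines normal to $v$ of Theorem~\ref{thm:Bezdek}, a careful accounting of the hierarchy levels and of the nested searches yields the bound $O(m\log^{4}m)$.

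I expect the reductions above to be the easy part and the data structure to be the main obstacle. The two delicate points are: (i) building the Dobkin--Kirkpatrick-type hierarchy for the dome so that \emph{slices} of $D$, and linear programs over them, are queryable in polylogarithmic time, rather than merely the polytope $D$ itself; and (ii) organizing the nested searches so that they terminate at the exact algebraic value $\rho_e$ — a rational function of the input data once the active pair of shadow edges is identified — instead of only converging to it. These are also where the logarithmic factors in the running time are incurred.
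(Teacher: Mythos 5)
Your reduction is the same as the paper's: pass from $P^{\rho}$ to $\interior_{\rho}(P)$, restrict the candidate directions to the edge normals $A_i$, introduce the dome $\widetilde P=\{(x,t):Ax\le b-t,\ t\ge 0\}$, preprocess it with a Dobkin--Kirkpatrick-type hierarchy, and output $\rho=\min_i\rho_i$ where $\rho_i$ is the zero of $f_i(t)=\width_{A_i}(\interior_t(P))-2(n-1)t$. Your observation that each $f_i$ is concave (via the Minkowski inclusion for inner parallel bodies) is correct and not explicitly in the paper. Two small inaccuracies in this part: some $f_i$ need not have a root in $(0,I(P))$ at all (think of the long direction of a thin rectangle), so you must either discard those indices or, as the paper does, test a sign condition at a suitable endpoint; and the minimum over the remaining indices is what equals $\rho$.

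The genuine gap is in the step you yourself flag as delicate: extracting each $\rho_e$ in polylogarithmic time. Your plan is a binary search on the angle $\theta$ to identify the edge of the shadow $R_{A_e}$ at a prescribed height, followed by an outer binary search ``over the right-boundary vertices of $R$''. Neither search is supported by the data structure you have built. A binary search over a continuous parameter $\theta$ does not terminate at the exact breakpoint unless you can search over the discrete set of edge slopes of $R_{A_e}$, and a binary search over the boundary vertices of $R_{A_e}$ needs indexed access to that sorted list; both of these are exactly the silhouette of $\widetilde P$ in direction $A_e$, which is not available without an $O(m)$ computation per direction (hence $O(m^2)$ overall). Knowing that $g_e$ is concave, decreasing and piecewise linear, together with an evaluation oracle (an LP on the slice $\widetilde P\cap\{t=\rho\}$), is not enough: exact root-finding for such a function via evaluations alone may need as many steps as there are breakpoints. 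The paper avoids all of this with one further idea that your proposal is missing: it introduces $M_i$, the largest $t$ at which facet $i$ still bounds an edge of $\interior_t(P)$ (the top of the $i$-th facet of the dome, one LP on a planar section), notes that for $t\le M_i$ the term $\max_{Ax\le b-t}A_i^Tx$ equals $b_i-t$, so that $f_i(t)=\max_{(x,t)\in\widetilde P}\bigl(b_i-A_i^Tx-(2n-1)t\bigr)$ becomes a maximum of linear functionals over the dome, and then reads off the zero-crossing from a \emph{single} linear program on $\widetilde P$ (or on $\widetilde P$ cut by the one extra halfspace $A_i^Tx\ge b_i-(2n-1)t$). That is the $O(\log^4 m)$-per-index step that yields the claimed bound; without it, or a worked-out substitute for your nested searches, the $O(m\log^4 m)$ claim is not established.
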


\section{The Dobkin-Kirkpatrick hyerarchy}

Equation~\ref{eq:rounded} suggests to formalize the Baker's potato problem adding one dimension to it. If, for a given convex body $C\in \R^d$, we define
\[
\overline C= \{(x,t)\in \R^d\times [0,\infty): x \in C^t\} \subset \R^{d+1}\},
\]
the problem to solve is to find the $\rho$ such that $\width(\overline C \cap \{t=\rho\}) = 2n\rho$.

We solve this using  the Dobkin-Kirkpartrick hierarchy, which allows to do linear programming queries in a $3$-dimensional polytope in logarithmic type per query. The classical version (which we do not use but state for completeness) is the following statement in which an \emph{extreme-point query} in a set $S$ of $m$ points has as input a linear functional $c\in \R^3$ and as output the point $p$ (or one of the points) of $S$ maximizing $c^T p$.

  \begin{theorem}[Dobkin-Kirkpatrick Hierarchy~\cite{DK1,DK2}, see also \protect{\cite[ Theorem 7.10.4]{orourke}}]
    After $O(m\log^2 m)$ time and space preprocessing, extreme-point queries in $3$ dimensions can be solved in $O(\log{m})$ time each.
  \end{theorem}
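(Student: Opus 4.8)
The plan is to follow the classical Dobkin--Kirkpatrick construction: in a preprocessing phase build a geometrically shrinking tower of nested convex polytopes topped by a tetrahedron, and then answer a query by walking down this tower doing only $O(1)$ work per level.

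For the preprocessing, I would first discard the points of $S$ that are not vertices of the convex hull --- these can never be the answer of an extreme-point query --- so that we may work with the simplicial $3$-polytope $P := \conv S$, which has $O(m)$ vertices, edges and facets and is computed, and its non-triangular facets triangulated, in $O(m\log m)$ time. Next, setting $P_1 := P$, I would repeatedly pass from $P_i$ to a coarser $P_{i+1}$ as follows. The graph of the simplicial $3$-polytope $P_i$ is planar on $n_i := |V(P_i)|$ vertices with $3n_i-6$ edges, so more than half of its vertices have degree at most $11$; a greedy scan of these yields an independent set $I_i$ of at least $n_i/24$ such low-degree vertices. Let $P_{i+1} := \conv\bigl(V(P_i)\setminus I_i\bigr)$. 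Since $I_i$ is independent and each $w \in I_i$ is a vertex of $P_i$, one checks --- by convexity, after the usual perturbation of degeneracies --- that $P_{i+1}$ is exactly $P_i$ with the pairwise-disjoint ``caps'' $C_w := \conv(\{w\}\cup N_{P_i}(w))$ shaved off and each resulting hole, a polygon on at most $11$ vertices, re-triangulated; I would store with each such $w$ the $O(1)$ triangles of $C_w$ and of its hole. Because $n_{i+1} \le \tfrac{23}{24}\, n_i$, this tower has $k = O(\log m)$ levels, $\sum_i n_i = O(m)$, and is built in $O(m)$ further time and space, so the convex-hull step dominates and the whole preprocessing is $O(m\log m) \subseteq O(m\log^2 m)$.

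To answer a query I would phrase it as ray-shooting via polar duality: fix an interior point $o$ of $P$ and let $P^\circ$ be the polar dual of $P$ about $o$, a $3$-polytope of the same combinatorial size with $o$ in its interior, preprocessed as above; then the vertex of $P$ maximizing $c^Tx$ is determined by which facet of $P^\circ$ is crossed by the ray $\{o + tc : t \ge 0\}$. Descending the tower $P^\circ = P^\circ_1 \supseteq \dots \supseteq P^\circ_k$ in reverse, I would first find in $O(1)$ time the exit facet $f_k$ of the ray on the tetrahedron $P^\circ_k$, and then, from the exit facet $f_{i+1}$ on $P^\circ_{i+1}$, compute the exit facet $f_i$ on the coarser $P^\circ_i \supseteq P^\circ_{i+1}$: the triangle $f_{i+1}$ either is already a facet of $P^\circ_i$, in which case $f_i = f_{i+1}$, or it lies in the re-triangulated hole of exactly one deleted vertex $w \in I_i$, in which case the ray, on leaving $P^\circ_{i+1}$ through $f_{i+1}$, enters the single cap $C_w$ and exits it --- and, by convexity, exits $P^\circ_i$ --- through one of the $O(1)$ facets of $C_w$, found in $O(1)$ time. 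Summed over the $k = O(\log m)$ levels this is an $O(\log m)$-time query, and reversing the polarity returns the extreme vertex of $P$.

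The step I expect to be the crux --- and it is the heart of the Dobkin--Kirkpatrick argument, not a triviality --- is precisely that the query costs only $O(1)$ per level: one has to see that it suffices to maintain a \emph{bounded-complexity} witness, here the exit triangle, because each re-triangulated hole is crossed by only $O(1)$ facets of the finer polytope and each deleted vertex's cap has only $O(1)$ facets, so the witness never needs to consult more than a constant amount of local information --- even though individual vertices of the $P^\circ_i$ may have arbitrarily large degree. The remaining ingredients --- the convex hull, the planar-graph ``many low-degree vertices / large independent set'' lemma, and the geometric decay of the tower --- are routine; the only other point to pin down carefully is that $\conv(V(P_i)\setminus I_i)$ really equals $P_i$ with its disjoint caps removed, so that the tower is well defined and the stored triangulations are correct.
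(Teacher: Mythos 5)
The paper does not prove this theorem. It is stated as a citation to Dobkin--Kirkpatrick and O'Rourke, and the text around it says explicitly that this classical version is ``not used but state[d] for completeness.'' What the paper actually develops (Definition~\ref{def:DK}, Lemma~\ref{lemma:DK}, and the lemmas and corollary that follow) is a dual variant for \emph{facet}-described polytopes: the hierarchy is built by repeatedly deleting an \emph{independent set of facets} chosen from a $6$-coloring of the planar dual graph of the polytope, rather than an independent set of low-degree vertices. So there is no paper proof to compare your argument against line by line; the fair comparison is with the paper's dual lemma.

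Your reconstruction is the standard primal Dobkin--Kirkpatrick construction and is correct in outline: hull computation, a tower of nested polytopes obtained by stripping independent sets of bounded-degree vertices, geometric decay giving $O(\log m)$ levels and $O(m)$ total size, and a query answered by polar ray-shooting refined level by level with $O(1)$ work per level. Two structural facts that you assert and that carry most of the weight --- that the ``new'' facets created by deleting an independent set partition into subfamilies each spanned by the neighbours of a single deleted vertex (so that the exit triangle $f_{i+1}$ lies in \emph{exactly one} hole), and that the corresponding cut-off regions are pairwise interior-disjoint --- are true but are the genuinely non-trivial content of the Dobkin--Kirkpatrick lemma; you flag them as ``the point to pin down carefully,'' which is the honest thing to do but also means the hard part is deferred. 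Also note that the region the ray traverses is $\overline{C_w\setminus P^\circ_{i+1}}$ rather than the full simplex $\conv(\{w\}\cup N(w))$; the two have the same relevant facets (hole triangles plus facets of $P^\circ_i$ incident to $w$) only once one has established the above structural facts. By contrast, the paper's Lemma~\ref{lemma:DK} sidesteps these geometric disjointness arguments entirely: deleting independent facets means each surviving vertex of $P_{l+1}$ can be ``killed'' in $P_l$ by at most one inequality by construction, and the bookkeeping is purely combinatorial; the price is a slightly different decay constant ($5/6$ per level via $6$-coloring versus your $23/24$ via low-degree vertices) and that it answers linear programs in $O(\log^d m)$, which degrades to $O(\log^3 m)$ per query in $\R^3$, rather than the $O(\log m)$ of the primal version. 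Both achieve $O(m\log m)$ preprocessing, within the $O(m\log^2 m)$ bound the theorem states.
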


The version we need works in the dual. In what follows we will assume that the facet hyperplanes in our polytopes are generic, that is, no $d+1$ of them have a common point. This implies the polytopes to be simple and is not a  loss of generality since it can be achieved by a symbolic perturbation of the input matrix. 

\begin{definition}
\label{def:DK}
  Let $A\in \R^{m\times d}$, $b\in \R^m$ be the half-space description of a  polytope $P$ in $\R^d$. We call \emph{Dobkin-Kirkpatrick hierarchy} on $P$ a data structure consisting of:
 \begin{enumerate}
 \item The face poset of $P$, in which each face $F$ of codimension $k$ is represented by the subset of size $k$ of $[m]$ consisting of facets containing $F$.
 \item A stratification of the set $[m]$ as
 \[
 [m] = I_0 \supset I_1 \supset \dots \supset I_k
 \]
with the property that the facets labelled by each $I_l\setminus I_{l+1}$ are independent (i.e., mutually non-adjacent) in the polytope $P_l$ defined by the inequalities $I_l$.
 \item For each vertex $x$ of each $P_{l+1}$ the following information: either the fact that  $x$ is still a vertex in $P_l$ or the label of the unique facet inequality of $P_l$ that is violated at $x$.
 \end{enumerate}
 We call $k$ the \emph{depth} of the hierarchy and $|I_k|$ the \emph{core size}.
 \end{definition}

Observe that in part (3) uniqueness of the facet follows from the fact that the facets labelled by  $I_l\setminus I_{l+1}$ are independent in $P_l$.

\begin{lemma}
\label{lemma:DK}
Let $P=\{x\in \R^3: Ax \le b\}$ be a bounded $3$-polyhedron defined by $A\in \R^{m\times 3}$, $b\in \R^m$.
  Then, a Dobkin-Kirkpatrick hierarchy on $P$ of depth $O(m \log m)$ and base size $O(1)$  can be computed in time $O(\log m)$.
\end{lemma}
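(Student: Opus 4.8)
\emph{Plan.} The plan is to carry out the classical Dobkin--Kirkpatrick peeling in the polar picture and bookkeep it in the format of Definition~\ref{def:DK}. A facet of a simple $3$-polytope corresponds to a vertex of its (simplicial) polar, two facets are adjacent iff the two polar vertices share an edge, and a set of facets is independent in the sense of Definition~\ref{def:DK} iff the corresponding set of polar vertices is independent in the (planar) graph of the polar. So the stratification $[m]=I_0\supset I_1\supset\dots\supset I_k$ will be produced by repeatedly deleting a linear-size independent set of vertices from the polar polytope, and everything rests on two standard facts: a planar graph on $n$ vertices has an independent set of $\Omega(n)$ bounded-degree vertices, computable in $O(n)$ time, and deleting such a set perturbs the polytope only locally.

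\emph{Step 1: combinatorics of $P$.} First I would turn the halfspace description $Ax\le b$ into the face poset of $P$. Since $P$ is a bounded $3$-polyhedron this is a $3$-dimensional convex-hull computation on the facet normals and costs $O(m\log m)$; by the genericity assumption $P$ is simple, so every vertex lies on exactly three facets, the facet-adjacency graph is planar with at most $3m-6$ edges, and the polar $P^\ast$ is a simplicial $3$-polytope on $m$ vertices. This yields the data of part~(1) of Definition~\ref{def:DK} with $I_0=[m]$, and it is the dominant term in the running time.

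\emph{Step 2: peeling.} Given $P_l$ with facet set $I_l$ and its polar $P_l^\ast$, whose graph $G_l$ is planar on $|I_l|$ vertices, the bound $\sum_v \deg_{G_l}(v)\le 6|I_l|-12$ forces more than half the vertices to have degree at most $11$; a single greedy sweep over those---adding a vertex whenever none of its $\le 11$ neighbours has already been chosen---produces an independent set $J_l=I_l\setminus I_{l+1}$ with $|J_l|>|I_l|/24$ in $O(|I_l|)$ time. Deleting the vertices of $J_l$ from $P_l^\ast$ and re-convexifying is local: since $J_l$ is independent, the only change near a deleted vertex $v$ is that its $O(\deg v)$-gon link gets re-triangulated by the local hull, the affected regions meet only along edges of $P_{l+1}^\ast$, and in one pass of total cost $O\bigl(\sum_{v\in J_l}\deg v\bigr)=O(|I_l|)$ we obtain the face poset of $P_{l+1}$ together with, for each vertex $x$ of $P_{l+1}$, either the fact that $x$ survives in $P_l$ (its triangle was already present) or the unique facet of $J_l$ that cuts it off (well defined by independence, as observed after Definition~\ref{def:DK}). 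Since $|I_{l+1}|<\tfrac{23}{24}|I_l|$, after $k=O(\log m)$ rounds we reach $|I_k|=O(1)$, and the peeling cost telescopes to $O(m)$; together with Step~1 this builds a hierarchy of depth $O(\log m)$ (hence $O(m\log m)$) and base size $O(1)$ in $O(m\log m)$ time.

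\emph{Main obstacle.} The point needing care is the locality and amortization in Step~2: one must check that deleting an independent set of vertices of a simplicial $3$-polytope alters the boundary complex only inside the union of their closed stars, that distinct such regions overlap only in surviving faces, and hence that the per-round work is $O(|I_l|)$ and sums geometrically; this is exactly the simplicial bookkeeping behind the original hierarchy, transported to $P^\ast$, together with the routine verification that a surviving vertex of $P_{l+1}$ violates at most one reinstated inequality. A minor technicality is that the polyhedron ``$P_l$ defined by $I_l$'' need not stay bounded even though $P$ is, so one keeps a constant number of guard constraints throughout (equivalently, works directly with the point hulls $\conv\{a_i : i\in I_l\}$); this changes neither the base size nor the asymptotics.
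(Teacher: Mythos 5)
Your proposal is correct and follows essentially the same route as the paper: both build the hierarchy by repeatedly peeling a linear-size independent set of facets, whose existence is guaranteed because the facet-adjacency graph of a simple $3$-polytope is planar, yielding depth $O(\log m)$, base size $O(1)$, and total time $O(m\log m)$ (you correctly read the $O(m\log m)$ and $O(\log m)$ bounds in the lemma statement as swapped). The only cosmetic differences are that you work in the polar and extract the independent set by a greedy sweep over vertices of degree at most $11$ (getting a $1/24$ fraction), while the paper works in the primal and uses a $6$-coloring of the planar dual graph (getting a $1/6$ fraction); and the paper handles boundedness by fixing a distinguished $6$-element core $I'$ that it never peels, where you instead invoke guard constraints or point hulls --- both amount to the same thing.
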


  \begin{proof} 
  First, it is well-known that the face poset of a $3$-polytope can be fully computed in the way we require in time $O(m\log m)$.
  
    Let $I=[m]$ be the row indices of $A$. 
    Let $I'$ be a subset of $[m]$ of size at most six and that defines a bounded polyhedron. This is, $\{x\in \R^3 : A_i x \leq b_i\ \  \forall i\in I'\}$ is a bounded set. 
 \footnote{Such an $I'$, of size at most $2d$, can be found in any facet-described $d$-polytope as follows: by inductive hypothesis assume that you know how to find such facets in polytopes of dimension smaller than $d$. To find them for $P$, start with any facet of $P$, say $I_1$ and solve the linear program $\min A^T_1 x$ on $P$. If the program has a unique minimum (a vertex) then let $I'$ be the original facet plus the $d$ containing that vertex. If the program is minimized at a face $F$ of dimension $0<d'<d$, then let $I'$ equal the original facet plus the $d-d'$ containing $F$ plus the at most $2d'$ that you can find by recursion. This gives $1+d+d'\le 2d$.
 
To find the facets, in the worst case you need to solve $d$ linear programs in dimension $\le d$, which can be done on $O(m)$ time (with a hidden constant depending on $d$).
}
    
    We now define the subsets $I=I_0, I_1, \dots, I_k$ of $I$ in the following recursive manner:
    Given $I_l$, we compute the face poset of the polyhedron $P_l$ defined by the rows of $Ax\leq b$ with indices in $I_l$.  
    We then compute a coloring of the facets of $P_l$ with at most $6$ colors, which can be done in linear time because the dual graph of $P_l$ is planar, so that the graph and all its subgraphs contain vertices of degree at most $5$.

We choose a color $C\subset I_l$ with 
\[
|C \cap (I_l\setminus I')| \ge \frac16 |I_l\setminus I'|
\]
and let $I_{l+1}= I_l \setminus C$.
    Eventually we reach an $I_k$ with $I_k=I'$, hence $|I_k| \leq 6 =O(1)$.
    Since each time we remove at least $1/6$th of the original inequalities (not in $I'$), $|I_l|\leq |I'|+\lceil {\left(\frac{5}{6}\right)}^l |I_0\setminus I'| \rceil$. Thus, we have at most $\log_{5/6}(m)+1$ steps in the hierarchy, so $k\in O(\log m)$. The whole computation needs time proportional to
    \begin{align*}
    \sum_{l=0}^k |I_l|\log(|I_l|) &\le \sum_{l=0}^k |I_l|\log(m) \le  \\
    &\le\log(m) \sum_{l=0}^{k} |I_l|\le \\
    &\le  \log(m) \left( 6+\sum_{l=0}^k (5/6)^l m\right) \le \\
    &\le  \log(m) \left( 6+6 m)\right) \le O(m\log m).
    \end{align*}

At each step we can easily identify which vertices appear and disappear, and what facet of $I_l$ is violated at each new vertex. When doing this each facet is only considered at one of the levels (the one in which it dissappears) and the total number of vertices in all layers is linear, since the sizes of the polytopes in the layers are bounded by a geometric sequence of ratio $5/6$.
\end{proof}

\begin{lemma}
Let $P$ be a facet-described polytope and $P'$ be a section of $P$ obtained by intersecting with a linear system of independent inequalities. Then, any Dobkin-Kirkpatrick hierarchy on $P$ is also a Dobkin-Kirkpatrick hierarchy on $P'$.
\end{lemma}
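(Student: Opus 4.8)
The plan is to check that a Dobkin--Kirkpatrick hierarchy on $P$ already contains, or lets one reconstruct in $O(1)$ per object, each of the three ingredients that Definition~\ref{def:DK} demands of a hierarchy on $P'$: the face poset, the stratification of the facet index set, and the redirect information. If so, turning the former into the latter costs no new $O(m\log m)$ preprocessing, only an $O(1)$ enlargement of the stratification and some purely local work.

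Write $P' = \{x : Ax \le b,\ A'x \le b'\}$, where the rows of $A'$ are the $p$ linearly independent new inequalities, so that --- after the standing generic perturbation --- $P'$ has facet index set $[m]$ together with $p$ new labels $*_1,\dots,*_p$. I would first \textbf{enlarge the stratification} of $P$ by reinstating these new labels in $p$ extra layers at the full-facet end, one label per layer:
\[
  [m]\cup\{*_1,\dots,*_p\} \supset [m]\cup\{*_2,\dots,*_p\} \supset \cdots \supset [m] = I_0 \supset I_1 \supset \cdots \supset I_k.
\]
On each of the $p$ new layers exactly one facet is removed, so the independence requirement of Definition~\ref{def:DK}(2) is vacuous there; and on a layer $I_l$ inherited from $P$ no new label is present any more, so the polytope it defines is still exactly $P_l$ and the facets removed in passing to $I_{l+1}$ are independent in it --- which is precisely what the hierarchy on $P$ already guarantees. (If instead $P'$ were a genuine lower-dimensional section $P\cap S$, one would use the same idea but now that a facet of $P_l\cap S$ is a subset $F_i\cap S\subseteq F_i$, so two such facets are disjoint whenever $F_i$ and $F_j$ are, and independence survives intersection with $S$.) Hence the stratification property holds, the depth grows by only $p=O(1)$, and the core size is unchanged.

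For the \textbf{redirect information}, the layers inherited from $P$ again need nothing: there the layer polytopes of $P'$ literally coincide with $P_{l+1}$ and $P_l$, so the stored labels are exactly the ones required. The real work --- and, I expect, the only genuine obstacle --- is on the $p$ new layers, because intersecting with the new inequalities creates vertices that are \emph{not} vertices of $P$ (the points where a new hyperplane cuts an edge of a coarser layer polytope), and their redirect labels are the one datum a hierarchy on $P$ does not record. Reinstating the new facets one at a time is what rescues this: passing to the next layer adds a single new facet, so ``the unique violated facet'' is automatically unique (the observation following Definition~\ref{def:DK}, applied to a one-element independent set), and for any vertex $v$ of the current layer the answer is decided by the single test $A'_jv\le b'_j$, with the coordinates of such a newly created $v$ recovered in $O(1)$ from the edge it lies on and the cutting hyperplane. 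Finally, the \textbf{face poset} of $P'$ is obtained from that of $P$ by $p$ successive truncations by the new hyperplanes, each of which changes the poset only locally around the cutting hyperplane; it is therefore computed in time linear in its size by a local traversal, and afterwards any query on $P'$ is answered by walking the enlarged hierarchy exactly as one would for $P$. Assembling these checks shows that a Dobkin--Kirkpatrick hierarchy on $P$ doubles as one on $P'$.
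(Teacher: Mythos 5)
The main body of your argument answers a different question from the one the lemma (and the paper's application) needs. You read ``intersecting with a linear system of independent inequalities'' as adding half-spaces to the description of $P$, so that $P'$ stays full-dimensional and acquires $p$ genuinely new facets $*_1,\dots,*_p$, and you then build an \emph{extended} hierarchy with $p$ extra layers. But the lemma is about a \emph{section}, i.e.\ a lower-dimensional slice $P'=P\cap H$ (the added constraints taken at equality): this is what Corollary~\ref{coro:DK} relies on, where planar sections of a $3$-polytope admit LP queries in $O(\log^3 m)=O(k^2\log m)$ precisely because the \emph{unchanged} hierarchy is reused with the dimension dropped from $3$ to $2$. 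Your construction also does not prove the statement as written --- the claim is that any hierarchy on $P$ \emph{is already} a hierarchy on $P'$, with no modification --- and it costs $\Theta(m)$ extra work per added constraint (testing every vertex of every layer against it, updating the face poset). Since the algorithm of Theorem~\ref{thm:main} takes a fresh section of the dome for each of the $m$ indices $i$, any per-section preprocessing of that order would reinstate the quadratic running time the paper is trying to beat.

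For the genuine section case you offer only the parenthetical remark that independence (non-adjacency) of facets survives intersection with $H$. That is indeed half of the proof, but the crux is item (3) of Definition~\ref{def:DK}: every vertex $x$ of a layer $P'_l=P_l\cap H$ is a \emph{new} point --- generically it lies in the relative interior of an edge $[u,v]$ of $P_l$ --- so the hierarchy of $P$ stores no violated-facet label for it, and this happens at \emph{every} layer, not just at $p$ special ones. The missing argument is: if $x\in[u,v]$ violates some inequality indexed by $I_l\setminus I_{l+1}$, then, since the set where a linear inequality is violated is a half-space containing $x$, at least one of $u,v$ violates it as well; hence the unique facet sought for $x$ is among the (at most two) labels that the hierarchy of $P$ stores for $u$ and $v$, and it can be looked up on the fly in the $O(\log m)$ time needed to identify $u$ and $v$, with no preprocessing of $P'$ at all. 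Without this step the redirect information of the hierarchy on $P'$ is unaccounted for, and the lemma is not proved.
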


\begin{proof}
By induction on $\dim P - \dim P'$ it is enough to consider the case $\dim P - \dim P'=1$, so that $P'$ is obtained from $P$ by adding one inequality, that is, intersecting with a hyperplane $H$.

The first observation is that of a set of facets are mutually non-adjacent on $P_l$ then they are also mutually non-adjacent on $P'_l:= P_l\cap H$, so the stratification of $[m]$ in the hyerarchy of $P$ works also in $P'$. We need only to show how the hierarchy on $P$ allows to find the information of which facets remove which vertices on $P'$. For this, observe that a vertex $x$ of a $P'_l$ is an edge of the corresponding $P_l$. Let $u$ and $v$ be the end-points of that edge. Then, $x$ can only be eliminated by the facet of $P'_{l+1}$ that eliminates one (or both) of $u$ and $v$ in $P_{l+1}$, and this can be checked in logarithmic time (the time needed to find the vertices $u$ and $v$).
\end{proof}

\begin{theorem}
Let $P$ be a $d$-polytope with $m$ facets and suppose that we have a Dobkin-Kirkpatrick hyerarchy on $P$ of depth $k$ and base $O(1)$. Then, a linear program on $P$ can be solved in time $O(k^d \log m)$.
\end{theorem}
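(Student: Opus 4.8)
The plan is to induct on $d$, using the hierarchy to walk \emph{upward} from the tiny base polytope $P_k$ (defined by the $O(1)$ inequalities in $I_k$) to $P=P_0$, maintaining at every stage the maximiser $x^*_l$ of $c^Tx$ over $P_l$, stored together with the $d$-subset of $[m]$ labelling the facets through it. Since every $P_l$ has dimension $d$ and $P_0\subseteq P_1\subseteq\dots\subseteq P_k$, the optimal value only decreases as $l$ drops, so the whole task is to pass from $x^*_{l+1}$ to $x^*_l$ cheaply. For the base case $d=1$ each $P_l$ is a segment: I would compute $x^*_k$ in $O(1)$ time, then for $l=k-1,\dots,0$ look $x^*_{l+1}$ up in part~(3) of the data structure; if it survives in $P_l$ keep it, and otherwise it is killed by the unique new facet $j\in I_l\setminus I_{l+1}$ and $x^*_l$ is the point of $H_j:=\{x:A_jx=b_j\}$ lying on the relevant surviving facet, read off in $O(\log m)$ time. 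This gives $T(1,k)=O(k\log m)$.

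For the inductive step in dimension $d$ I would brute-force the LP over $P_k$ in $O(1)$ time to obtain $x^*_k$, and then, for $l=k-1,\dots,0$, look $x^*_{l+1}$ up in part~(3). If it is still a vertex of $P_l$, set $x^*_l:=x^*_{l+1}$, which is correct because $P_l\subseteq P_{l+1}$. Otherwise it violates a unique new facet $j\in I_l\setminus I_{l+1}$, and the key claim below says that the maximiser over $P_l$ then lies on facet $j$ of $P_l$ and equals the maximiser over the $(d-1)$-polytope $F:=P_{l+1}\cap H_j$. Restricting the stratification $I_{l+1}\supset I_{l+2}\supset\dots\supset I_k$ to $P_{l+1}$ (whose chain polytopes coincide with those of the given hierarchy) and then invoking the lemma that a Dobkin--Kirkpatrick hierarchy restricts to any section yields such a hierarchy on $F$ of depth $\le k$ and base $O(1)$; a recursive call on $F$ returns $x^*_l$ in time $T(d-1,k)$, and $x^*_l$ is again a vertex of $P_l$, so the walk-up continues. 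Each of the $\le k$ levels costs $O(\log m)$ plus at most one recursive call, so
\[
T(d,k)\ \le\ O(k\log m)+k\cdot T(d-1,k),
\]
which, together with $T(1,k)=O(k\log m)$, solves to $T(d,k)=O(k^d\log m)$; the algorithm outputs $x^*_0$.

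The substantive point --- and the step I expect to be the main obstacle --- is the key claim: when the maximiser $x^*_{l+1}$ of $P_{l+1}$ is cut off by the single new facet $j$, the new maximiser lands on \emph{that} facet, rather than on one of the possibly many other new facets; this is exactly what lets each level cost one lower-dimensional LP instead of a search over candidate facets. I would argue as follows. Put $Q:=P_{l+1}\cap\{x:A_jx\le b_j\}\supseteq P_l$. Since the maximiser $x^*_{l+1}$ over $P_{l+1}$ (unique after a generic tie-break) violates $A_jx\le b_j$, the maximiser $w$ over $Q$ must lie on $H_j$ (the standard fact that adding a violated constraint pushes the optimum onto that constraint), i.e.\ $w$ is the maximiser over $F=P_{l+1}\cap H_j$, with $c^Tw<c^Tx^*_{l+1}$. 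For every other new facet $i\ne j$ the point $x^*_{l+1}$ satisfies $A_ix\le b_i$, so intersecting $P_{l+1}$ with that one halfspace does not move the optimum; taking the worst of these single-halfspace restrictions, and using $P_l\subseteq P_{l+1}\cap\{A_ix\le b_i\}$ for every new $i$, gives $\max_{P_l}c^Tx\le c^Tw$.

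For the reverse inequality it suffices to show $w\in P_l$. If not, $w$ is cut off from $F$ by the restriction to $H_j$ of some other new facet $i\ne j$; as $w$ is generically a vertex of $F$, hence a vertex of $P_l$ on facet $j$, this cut creates a face of $P_l$ of codimension two lying in $H_i\cap H_j$, that is, a common ridge of facets $i$ and $j$ of $P_l$ --- contradicting that the facets labelled by $I_l\setminus I_{l+1}$ are pairwise non-adjacent in $P_l$. Hence $w\in P_l$, so $\max_{P_l}c^Tx=c^Tw$ and $x^*_l=w$ sits on facet $j$. The remaining points are routine: genericity of $c$, obtained by a symbolic perturbation, and the bookkeeping inherited from the restriction lemma that answers in $O(\log m)$ time which facet of a section kills which of its vertices --- the queries driving the lower-dimensional recursions.
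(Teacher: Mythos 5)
Your proposal is correct and follows essentially the same route as the paper: a reverse traversal of the hierarchy in which a surviving optimum is kept, and an optimum cut off by the unique violated new facet triggers a $(d-1)$-dimensional recursive LP on that facet (via the restriction lemma), giving the recurrence $T(d,k)\le k\,T(d-1,k)+O(k\log m)$. Your ``key claim'' --- that non-adjacency of the facets in $I_l\setminus I_{l+1}$ forces the new optimum onto the single violated facet and makes $P_{l+1}\cap H_j=P_l\cap H_j$ --- is exactly the step the paper uses implicitly without spelling out, so you have only made its argument more explicit, not changed it.
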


\begin{proof}
In order to solve the linear program with objective function $c^Tx$ we traverse the hierarchy in reverse. In the last polytope $P_k$ we need constant time since it has at most $O(1)$ facets. Once we have the maximizer $x_{l+1}^*$ in $P_{l+1}$ we find the maximizer in $P_l$ as follows: if $x_{l+1}^*$ is in $P_l$ (that is, if it satisfies the inequalities with indices in $I_l\setminus I_{l+1}$) then we set $x_{l}^*=x_{l+1}^*$. 
    
    If $x_{l+1}^*$ is not in $P_l$ then by construction of the hierarchy, there is a unique inequality in $I_l\setminus I_{l+1}$ violated by $x_{l+1}^*$ (this is because no two facets indexed by $I_l\setminus I_{l+1}$ are adjacent in $P_l$). We solve the linear program on that facet to find 
    $x_{l+1}^*$. By inductive hypothesis this step requires $O(k^{d-1} \log m)$, and we need to do this at most $k$ times.
 \end{proof}

\begin{corollary}
\label{coro:DK}
  Any facet described $3$-polytope with $m$ facets can be preprocessed in time $O(m \log m)$ so that linear programs on $P$ can be solved in time $O(\log^4 m)$ and linear programs on planar sections of it in 
time $O(\log^3 m)$
\end{corollary}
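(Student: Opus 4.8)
The plan is to bolt the three results just established together, specialized to dimension three.

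\emph{Preprocessing.} Given $P=\{x\in\R^3:Ax\le b\}$ with $m$ facets, I would first reduce to the case where $P$ is bounded and its facet planes are generic (so $P$ is simple), which is harmless by the symbolic perturbation discussed above, and then invoke Lemma~\ref{lemma:DK} to build a Dobkin--Kirkpatrick hierarchy on $P$ in $O(m\log m)$ time and space. This hierarchy has depth $k=O(\log m)$ and core size $O(1)$, and it is the only preprocessing we will ever perform.

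\emph{Linear programs on $P$ and on sections.} For an LP $\max c^Tx$ over $P$ I would feed this hierarchy into the preceding theorem with $d=3$, obtaining a query time of $O(k^3\log m)=O(\log^4 m)$. For a planar section $P'=P\cap H$, note that $P'$ is a $2$-polytope and that intersecting with a single hyperplane is exactly the codimension-one situation treated in the restriction lemma above; hence the hierarchy already computed \emph{is}, without any modification or extra preprocessing, a Dobkin--Kirkpatrick hierarchy on $P'$, still of depth $O(\log m)$ and core size $O(1)$. Applying the LP-solving theorem to $P'$ with $d=2$ gives query time $O(k^2\log m)=O(\log^3 m)$.

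\emph{What needs care.} There is no real obstacle here --- this is a corollary in the literal sense --- but three small points deserve a sentence each. First, the hypotheses of Lemma~\ref{lemma:DK} (boundedness, generic facets) must be recognized as without loss of generality, which was argued when those hypotheses were introduced. Second, to legitimately use $d=2$ in the LP-solving theorem one needs the section to genuinely be two-dimensional, so one should assume $H$ is in general position with respect to $P$; a degenerate section collapsing to an edge or a point is a non-issue, handled separately or excluded by the perturbation. Third, one should double-check that the restriction lemma, whose proof is phrased in terms of intersecting with a hyperplane, indeed covers the equality constraint defining $P'$ --- it does. The constants hidden in $O(k^d\log m)$ depend only on the fixed $d\in\{2,3\}$, and it is the core size $O(1)$ that keeps the bottom of the recursion at constant cost.
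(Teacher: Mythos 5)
Your proposal is correct and assembles the corollary exactly as the paper intends: the paper leaves it as an immediate consequence of Lemma~\ref{lemma:DK} (preprocessing in $O(m\log m)$ with depth $k=O(\log m)$ and core size $O(1)$), the restriction lemma for sections, and the $O(k^d\log m)$ query theorem applied with $d=3$ and $d=2$ respectively. Your remarks on genericity and on degenerate sections are sensible housekeeping that the paper handles the same way via symbolic perturbation.
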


\section{Proof of Theorem~\ref{thm:main}}

  Without loss of generality let us assume $||A_i|| =1$ for every $i$. We also  assume that the given description of $P$ is irredundant (every row of $A$ is a facet), which we can check with a (dual) convex hull computation in $O(m \log m)$ time.

  We want to compute the value $\rho>0$ for which
 \pacoc{ Not sure if I want to include rho=0 or not. The original does not but I find more convenient including it as I need to study the function near 0 to show it's positive. It doesn't matter as the solution will never be rho=0 but we have to think which is more stylish.}
  \[ \width(P^{\rho})=2n\rho.\]

  Since $P^{\rho} = \interior_{\rho} (P) + B(0,\rho)$, we have that 
  \[
  \width(P^{\rho}) = \width(\interior_{\rho} (P) ) +2\rho.
  \]
  Hence, by definition of width, the $\rho$ and $v$ we are looking for must satisfy:
  \[ \min_{v \in \Sph^2} \left( \width_{v} \left( \interior_{\rho}(P) \right) - 2(n-1)\rho \right) = 0. \]

 The direction minimizing width in the polygon $ \interior_{\rho}(P)$ is normal to an edge of $ \interior_{\rho}(P)$,\footnote{A version of this is true in any dimension: by the Karush-Kuhn-Tucker conditions, the $v$ minimizing width in any polytope must be the common normal to two faces of $P$ with sum of dimensions $\ge d-1$.} hence to an edge of $P$.
  Thus, we do not need to check for all $v$, only those normal to edges of $P$. We use the outwards normals without loss of generality; that is, $v$ must be a row of $A$.

  Now let $r>0$ be the inradius of $P$; observe that $0<\rho<r$. For each $i\in [m]$, let  $f_i: [0,r] \rightarrow \R$ be defined as
  \[ f_i(t) = \width_{A_i} \left(\interior_{t}(P) \right) - 2(n-1)t ,\]
  so that the equation that characterizes $\rho$ is:
    \begin{equation}
 \min_{i\in [m]} f_i(\rho) = 0. 
 \label{eq:rho}
\end{equation}

  Each $f_i$ is well defined (as $\interior_{t} (P)$ is not empty for $0\le t < r$), continuous, piece-wise linear, and monotonically decreasing. 
  At $t=0$ every $f_i$ is positive and at $t=r$ some $f_i$ is negative because the width of $\interior_r (P)$ is $0$ in some direction.

  Then, \eqref{eq:rho} implies that $\rho$ is exactly:
  \[ \rho = \min \{ 0<t<r : \exists i\in [m] : f_i(t) = 0 \} .\]
  Indeed, some $f_i$ is guaranteed to have a root by continuity, and $\rho$ is a root for some $f_i$. If $\rho$ were not the minimum root, then some other root is smaller and by the $f_i$ being strictly decreasing, some $f_i$ is negative at $\rho$.

  So, in order to find $\rho$ we need only to compute the minimum of the roots of the $f_i$. This is not trivial, since the definition of each $f_i$ is quite implicit.
  However we need not verify all of them. For each $i\in [m]$, let $M_i$ be the maximum $t$ such that $A_i x \leq b-t$ still defines an edge of $\interior_t (P)$. Then, for $t>M_i$ the minimum width of $\interior_t(P)$ cannot be attained at the direction $A_i$, since it needs to be attained at the normal to an edge of $\interior_t (P)$.
  Thus,
  \[ \rho = \min \{ 0<t<r : \exists i\in [m] : f_i(t) = 0, t_i \le M_i   \} .\]
  Equivalently, by continuity and monotonicity,
  \[ \rho = \min \{ 0<t<r : \exists i\in [m] : f_i(t) = 0, f_i(M_i)\leq 0 \} .\]

We claim that (after preprocessing), we can compute each $M_i$ in polylogarithmic time. For this, consider the following three-dimensional polytope that we call the \emph{dome} of $P$:
\[
\widetilde P:=\{(x,y,t) \in \R^3 : Ax\leq b-t, t\geq 0 \}.
\]
That is, $\widetilde P \cap \{t=0\}$ equals $P$ and, apart from the horizontal facet, $\widetilde P$ has a facet with normal $(A_i,1)\in \R^3$ for each $i\in [m]$. Assume that we have preprocessed $\widetilde P$ as required in Corollary~\ref{coro:DK}.
The value $M_i$ equals the maximum of the $t$ coordinate of the $i$-th facet of $\widetilde P$ and, by the corollary,  it can be computed in time $O(\log^3 m)$.

Thus, we can compute all the $M_i$ in time $O(m\log^3 m)$. Once this is done 
we evaluate all $f_i(M_i)$ also in total time $O(m\log^3 m)$ by solving the linear programs with objective functions $A_i$ in the horizontal slices  
$P_i := \widetilde P \cap \{t=M_i\}$ of the dome.

  The motivation for adding the restrictions $t_i \le M_i$ is that in this range the functions $f_i$ are easier to compute. Recall that 
    \[ 
  f_i(t) = \max_{x: Ax\leq b-t} A_i^T x - \min_{x: Ax \leq b-t} A_i^T x - (2n-2)t .
  \]
Now,   in the range $0\le t \le M_i$ we have
  \[
   \max_{x: Ax\leq b-t} A_i^T x = b_i - t,
   \]
so we can rewrite
    \[ 
  f_i(t) = b_i - \min_{x: Ax \leq b-t} A_i^T x - (2n-1)t = \max_{x:Ax \leq b-t} \left(b_i-A_i^T x - (2n-1)t \right).
  \]

  We want to solve $f_i(t) =0$, for $0\leq t \leq M_i$ and $f_i(M_i)<0$. Equivalently, we want the unique $t$ such that:
  \[ 0 = \max_{x:Ax \leq b-t} \left(b_i-A_i^T x - (2n-1)t \right). \]

  This is the same as finding:
  \[ 
  \arg \quad \max_{t:0\leq t \leq M_i} \quad 
  \max_{\substack{x:Ax\leq b-t \\ A_i^Tx  \geq b_i- (2n-1)t}} \quad (b_i - A_i^Tx -(2n-1)t). 
  \]

This is a linear program on the dome, except we have an extra constraint $A_i^Tx  \geq b_i- (2n-1)t$. 
In order to solve it we solve it first without the constraint. 
If the optimum satisfies the extra constraint we are done, and if not the optimum we want is obtained solving the linear program in the section $\widetilde P \cap \{A_i^Tx  \geq b_i- (2n-1)t\}$. 
So, this linear program is solved in time $O(\log^4 m)$.
  The minimum of the solutions of these programs for the different choices of $i$ is the value of 
$\rho$ we are looking for.



{\small

}

\end{document}